\newfont{\footsc}{cmcsc10 at 8truept}
\newfont{\footbf}{cmbx10 at 8truept}
\newfont{\footrm}{cmr10 at 10truept}
\newtheorem{theorem}{\bf Theorem}
\newtheorem{proposition}{\bf Proposition}
\newtheorem{lemma}{\bf Lemma}
\newtheorem{conjecture}{\bf Conjecture}
\begin{document}
\title{Concave Renewal Functions Do Not Imply DFR Inter-Renewal Times}

\author{Yaming Yu\\
\small Department of Statistics\\[-0.8ex]
\small University of California\\[-0.8ex] 
\small Irvine, CA 92697, USA\\[-0.8ex]
\small \texttt{yamingy@uci.edu}}

\date{}
\maketitle

\begin{abstract}
Brown (1980, 1981) proved that the renewal function is concave if the inter-renewal distribution is DFR (decreasing failure rate), and conjectured the converse.  This note settles Brown's conjecture with a class of counter-examples.  We also give a short proof of Shanthikumar's (1988) result that the DFR property is closed under geometric compounding.

{\bf Keywords:} Renewal theory; log-convexity.

{\bf MSC2010 Classifications:} 60K05
\end{abstract}

\section{Introduction}
Structural relationships between the renewal function and the underlying distribution are of great interest in renewal theory.  For a renewal process with decreasing failure rate (DFR) inter-renewal times, it is known that the renewal function is concave (Brown 1980).  Conversely, Brown (1981) conjectures that DFR inter-renewal times are also necessary for the concavity of the renewal function.  As shown by Shanthikumar (1988), there exist counter-examples to a discrete analogue of this conjecture.  Brown's conjecture in the continuous case, however, has remained open.  See Szekli (1986, 1990), Hansen and Frenk (1991), Shaked and Zhu (1992), Kijima (1992), and Kebir (1997) for related results and discussions.  Also relevant is the work of Lund, Zhao and Kiessler (2006), who use hazard rates and renewal sequences to study reversible Markov chains. 

In this note we construct absolutely continuous distributions that do not have decreasing failure rates but nevertheless lead to concave renewal functions.  That is, we give a definite answer to Brown's question in the negative.  Our counter-examples have the following feature.  On $[0, t_1]$ for some $t_1>0$, the inter-renewal time has a decreasing failure rate; on $[t_1, t_2]$ for some $t_2> t_1$, the failure rate strictly increases before decreasing again on $[t_2, \infty)$.  It is shown that, for a suitable class of such distributions, if the increase in failure rate on $[t_1, t_2]$ is small enough, and the decrease shortly after $t_2$ is fast enough, then the resulting renewal density is decreasing, i.e., the renewal function is concave.  Section~2 presents the precise statements and illustrates with a numerical example.  Section~3 contains the proofs. 

The renewal process is closely related to compound geometric random variables.  In Section~4, by adapting the arguments of de Bruijn and Erd\"{o}s (1953), we give an alternative proof of Shanthikumar's (1988) result that the DFR property is closed under geometric compounding. 

\section{Concavity of the renewal function}
Let $F(t)$ be a distribution function on $\mathbf{R}_+=[0, \infty)$ with $F(0)=0$.  Then the renewal function $M(t)$, i.e., the average number of renewals in $[0, t]$, for a renewal process with underlying distribution $F$ is given by
$$M(t)=F(t)+\int_0^t M(t-x)\, {\rm d} F(x),\quad t\geq 0.$$
(Some authors define $M(t)+1$ as the renewal function; our results work with either definition.)  If $F(t)$ is absolutely continuous with density $f(t)$, then so is $M(t)$, and a version of its density, $m(t)$, satisfies 
\begin{equation}
\label{density}
m(t)=f(t)+\int_0^t m(x) f(t-x)\, {\rm d} x,\quad t\geq 0.
\end{equation}
A positive function $g(x),\ x\in \mathbf{R}_+,$ is log-convex if $\log g(x)$ is convex on $\mathbf{R}_+$.  A distribution on $\mathbf{R}_+$ has DFR (decreasing failure rate), if its survival function is log-convex on $\mathbf{R}_+$.  We recall two fundamental results relating $M(t)$ to $F(t)$. 

\begin{theorem}[de Bruijn and Erd\"{o}s (1953); Brown (1980); Hansen and Frenk (1991)]
\label{thm0}
We have 
\begin{enumerate}
\item
If $F(t)$ has a log-convex density $f(t)$, then the renewal density $m(t)$ as in (\ref{density}) is also log-convex. 
\item
If $F(t)$ is DFR, then $M(t)$ is concave. 
\end{enumerate}
\end{theorem}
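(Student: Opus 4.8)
The plan is to prove the two parts by quite different routes.

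For part (1), the first step is to iterate the renewal equation (\ref{density}) into the series $m=\sum_{n\ge1}f^{\ast n}$, where $f^{\ast n}$ is the $n$-fold convolution of $f$; this converges on every compact interval (its integral over $[0,t]$ is $M(t)=\sum_{n\ge1}F^{\ast n}(t)<\infty$) and its partial sums increase pointwise to $m$. Log-convexity of $m$ then amounts to the midpoint inequality $m(t)^2\le m(t-\delta)\,m(t+\delta)$ for $0<\delta<t$, with continuity promoting midpoint convexity of $\log m$ to convexity. The tempting term-by-term route---prove each $f^{\ast n}$ log-convex and sum, using that a finite sum of log-convex functions is log-convex (H\"older on $\log(e^{\phi}+e^{\psi})$) and an increasing pointwise limit of log-convex functions is log-convex (a supremum of convex functions is convex)---is blocked, because convolution does not preserve log-convexity (for instance $e^{-x}\ast e^{-x}=xe^{-x}$ is log-concave). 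So the log-convexity of $m$ has to be extracted from a cancellation running through the whole series, and, following de Bruijn and Erd\"{o}s (and Brown, and Hansen and Frenk), I would prove the midpoint inequality for $m$ directly by an induction---on $t$, or on $n$ in the discrete analogue, where one shows $u_n^2\le u_{n-1}u_{n+1}$---in which the renewal equation rewrites the ``log-convexity gap'' at the current level as a non-negative combination of the gaps at lower levels and of the inequalities $f_{j-1}f_{l+1}\ge f_jf_l$ supplied by log-convexity of $f$. I expect this global inductive estimate to be the whole difficulty of part (1); nothing in it reduces to treating the convolution powers one at a time.

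For part (2), I would use that $M$ is concave if and only if, for every $h>0$, the increment $M(t+h)-M(t)$ is non-increasing in $t$. Conditioning on the age $A(t)$ at time $t$ (the elapsed time since the last renewal at or before $t$, with $A(t)=t$ if there has been none) and using that the age process is a time-homogeneous Markov process gives $M(t+h)-M(t)=E[G_h(A(t))]$, where $G_h(a)$ is the expected number of renewals in $(t,t+h]$ given age $a$ at time $t$. Two monotonicity facts then finish the proof. First, $G_h$ is non-increasing: decomposing on the residual life $R_a$, which satisfies $P(R_a>y)=\bar F(a+y)/\bar F(a)$, gives $G_h(a)=E[\mathbf 1\{R_a\le h\}\,(1+M(h-R_a))]$; the integrand is a non-increasing function of $R_a$ (since $M$ is non-decreasing), and the DFR property makes $R_a$ stochastically non-decreasing in $a$ (immediate, since $\bar F(a+y)/\bar F(a)$ is non-decreasing in $a$ when $\log\bar F$ is convex). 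Second, $A(t)$ is stochastically non-decreasing in $t$: the age process has a stochastically monotone transition semigroup---and this is exactly where DFR enters, via a coupling in which two copies started from $a\le a'$ are run off a common Poisson ``reset clock'' with the lower copy always carrying the larger reset rate $r=f/\bar F$ (which is non-increasing), so that the lower copy stays lower; hence, writing $A^{(a)}$ for the age process started from $a$, the Markov property gives $A(t+h)\stackrel{d}{=}A^{(A(h))}(t)$ while $A(t)=A^{(0)}(t)$, and $A(h)\ge0$ together with the monotonicity of $a\mapsto A^{(a)}(t)$ yields $A(t+h)\ge_{\mathrm{st}}A(t)$. Combining the two facts, $M(t+h)-M(t)=E[G_h(A(t))]$ is non-increasing in $t$.

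The crux of part (2) is the stochastic monotonicity of the age process---i.e.\ making the monotone coupling of this piecewise-deterministic Markov process rigorous---with everything else a short computation. It is worth noting, finally, that part (1) does not hand us part (2): a log-convex density is DFR but not conversely, and for a merely DFR $F$ the density $f$ need not be log-convex, so the stronger conclusion ``$m$ log-convex'' of part (1) is not available under the hypothesis of part (2), and a genuinely separate argument is needed.
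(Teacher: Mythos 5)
This theorem is cited background in the paper, not reproved there; the paper's original machinery (Lemma~\ref{lem} and Theorem~\ref{thm2}) consists of \emph{analogues} of the two parts rather than a proof of them, so the comparison is against those. Your part (1) is on the right track as far as it goes: you correctly observe that the term-by-term route through $m=\sum_{n\ge1}f^{*n}$ is blocked because convolution does not preserve log-convexity (your example $e^{-x}*e^{-x}=xe^{-x}$ is exactly right), and that a global identity relating the log-convexity gap of $m$ to lower-order gaps and to the log-convexity of $f$ is required. But you stop at asserting that such an identity exists, and never write it down or verify the sign of the coefficients multiplying the lower-order gaps. The paper supplies the discrete model you would need: identity (\ref{induct}) in the proof of Theorem~\ref{thm2} expresses $\bar G_{n+2}\bar G_n-\bar G_{n+1}^2$ as a manifestly non-negative combination once the joint induction hypothesis is in place, and de~Bruijn--Erd\H{o}s's Eq.~(7) is its density-level continuous counterpart. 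As written, part (1) is a roadmap, not a proof.

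Part (2) is where you genuinely diverge from the paper. You take Brown's (1980) probabilistic route: $M(t+h)-M(t)=\mathbf{E}[G_h(A(t))]$, monotonicity of $G_h$ from the DFR-induced stochastic monotonicity of the residual life $R_a$, and stochastic monotonicity of the age process $A(t)$ via a monotone coupling driven by the non-increasing hazard rate. This is sound, and you correctly identify the rigorous construction of the coupling for the piecewise-deterministic age process as the crux. The paper's own toolkit points instead to an analytic argument: Lemma~\ref{lem}'s identity (\ref{key}), $m'(t)=r'(t)\bar F(t)+\int_0^t m'(x)\bigl[r(t-x)-r(t)\bigr]\bar F(t-x)\,\mathrm{d}x$, together with the bootstrap used in the proof of Proposition~\ref{thm1}, shows directly that $r'\le0$ and $r$ non-increasing force $m'\le0$, i.e.\ $M$ concave. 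The trade-off is clear: your coupling argument applies to any DFR $F$ (no density needed), whereas the (\ref{key}) route requires the smoothness hypotheses of Lemma~\ref{lem} but is shorter and purely calculational. Your closing observation that part (1) does not subsume part (2) --- since DFR does not imply a log-convex density --- is correct and worth keeping.
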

The question raised by Brown (1981) may be formulated as follows.
\begin{conjecture}
\label{conj}
If the renewal function $M(t)$ is concave on $\mathbf{R}_+$, then $F(t)$ is DFR.
\end{conjecture}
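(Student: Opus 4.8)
The title of the paper already signals that Conjecture~\ref{conj} is \emph{false}, so the plan is not to prove it but to refute it: I would exhibit an absolutely continuous $F$ on $\mathbf{R}_+$ that is not DFR yet has a nonincreasing renewal density $m$ (equivalently, a concave $M$). The design principle, as flagged in the introduction, is to start from a distribution comfortably inside the DFR class and then splice in a small, local violation of DFR that is too weak to disturb the monotonicity of $m$. Concretely, I would fix a smooth, strictly log-convex density $f_0$ — for definiteness a mixture of at least two exponentials, so that $f_0(0)<\infty$ and $\tilde f_0$ is rational. By the first part of Theorem~\ref{thm0} the renewal density $m_0$ is then strictly log-convex; since $m_0(t)\to 1/\mu_0$ (the reciprocal of the mean) by the renewal theorem and a strictly log-convex function with a finite positive limit must be strictly decreasing, we get $m_0'<0$ on $\mathbf{R}_+$, with $m_0'\le -c(T)<0$ on each compact $[0,T]$.

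Next I would introduce the perturbation. Choose $0<t_1<t_2<t_3$ and set $f_\varepsilon=f_0+\varepsilon g$, where $g$ is a fixed bounded function supported on $[t_1,t_3]$ with $\int_0^\infty g=0$ (so $f_\varepsilon$ is again a density) and shaped so that the failure rate $r_\varepsilon=f_\varepsilon/\bar F_\varepsilon$ is still decreasing on $[0,t_1]$, strictly increasing on $[t_1,t_2]$, and decreasing again on $[t_2,\infty)$ — the "bump" on $[t_1,t_2]$ having size $O(\varepsilon)$ while the compensating drop just after $t_2$ is made sharp. For all sufficiently small $\varepsilon>0$ one has $f_\varepsilon\ge 0$ and $F_\varepsilon$ is genuinely not DFR.

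The heart of the argument is then a stability estimate showing $m_\varepsilon\to m_0$ strongly enough that $m_\varepsilon'$ stays negative. Working with Laplace transforms, $\tilde m_\varepsilon=\tilde f_\varepsilon/(1-\tilde f_\varepsilon)$ and $1/(1-\tilde f_0)=1+\tilde m_0$, so to first order $m_\varepsilon-m_0=\varepsilon\,g*(\delta_0+m_0)^{*2}+O(\varepsilon^2)$ in a suitable norm; differentiating, $m_\varepsilon'=m_0'+\varepsilon\,[\,g*(\delta_0+m_0)^{*2}\,]'+O(\varepsilon^2)$, where the bracket is a fixed function, bounded on compacts (it involves $g$, $g'$, $m_0$, $m_0'$ only through convolutions). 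Hence on each $[0,T]$, $m_\varepsilon'\le -c(T)+C(T)\varepsilon<0$ for $\varepsilon$ small; for $t\ge T$ with $T$ large, $m_\varepsilon(t)$ is within a controlled distance of its limit $1/\mu_\varepsilon$ and a direct key-renewal (or coupling) estimate shows $m_\varepsilon$ is eventually monotone there as well. Splicing the two ranges gives $m_\varepsilon$ nonincreasing on all of $\mathbf{R}_+$, so $M_\varepsilon$ is concave while $F_\varepsilon$ is not DFR, contradicting the conjecture.

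The main obstacle is making Step~3 uniform in $t$: the map $f\mapsto m=\sum_{n\ge1}f^{*n}$ is highly nonlinear, and the perturbation $\varepsilon g$ propagates into infinitely many convolution powers, so the first-order expansion must be justified with norm bounds valid on the whole half-line — in particular one needs a quantitative handle on $m_\varepsilon(t)-1/\mu_\varepsilon$ and its derivative as $t\to\infty$. A secondary subtlety is engineering $g$ so that the decrease--increase--fast-decrease picture for $r_\varepsilon$ is actually realised and the DFR violation is strict: because $r_\varepsilon$ depends on $g$ both pointwise and through $\int g$, the shape of $g$ has to be chosen with some care, which is presumably what "a suitable class of such distributions" refers to. If one instead takes $f_\varepsilon$ in an explicit rational-Laplace-transform family (a short combination of exponential and Erlang terms), the perturbative estimates in Step~3 can be replaced by a finite closed-form computation of $m_\varepsilon$, trading the analytic delicacy for algebra.
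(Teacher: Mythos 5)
Your proposal rightly aims at refutation rather than proof, and the high-level idea---start from a comfortably DFR distribution, splice in a small local hazard-rate bump on $[t_1,t_2]$ that is compensated by a sharp drop just after $t_2$, and argue that the renewal density stays decreasing---is exactly the shape of the paper's counterexample. But the analytic route you take is genuinely different, and it is where the gaps live. The paper does \emph{not} perturb a log-convex density or expand $m$ to first order in a Laplace-transform series; instead it proves an exact identity (the Lemma, going back to Kaluza and de Bruijn--Erd\H{o}s),
\begin{equation*}
m'(t)= r'(t)\bar{F}(t) +\int_0^t m'(x)\bigl[r(t-x)-r(t)\bigr]\bar{F}(t-x)\, {\rm d}x,
\end{equation*}
and then chooses the hazard rate so that this identity \emph{bootstraps} negativity of $m'$ forward in time: if $m'<0$ on $[0,t)$, $r'(t)\le 0$, and $r(t-x)\ge r(t)$ for $0<x<t$, the identity forces $m'(t)<0$, with no need for any uniform-in-$t$ asymptotic control. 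Condition (iv) ($r(t_3)\le r(t_1)$ and $r'\le 0$ on $I_3$) is engineered precisely so that this self-reinforcing mechanism runs for all $t>t_3$, and the explicit form (2) of $r$ on $I_1$ makes $r'(t_1{+})\downarrow 0$ as $\epsilon\downarrow 0$, which together with the strict inequality $m'(t_1{-})<r'(t_1{-})\bar F(t_1)$ buys the needed room on $I_1$. Only the short interval $I_2$ requires a separate argument (via (10) and the pointwise inequality (3) on $r'$).

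The two gaps in your sketch are real, and the second is fatal as written. (1) The first-order expansion $m_\varepsilon-m_0=\varepsilon\,g*(\delta_0+m_0)^{*2}+O(\varepsilon^2)$ is correct formally, but you need the remainder and its derivative to be small \emph{uniformly on $\mathbf{R}_+$}, and this is nontrivial because the perturbation propagates through all convolution powers $f_\varepsilon^{*n}$; you flag this yourself. (2) The claim that ``a direct key-renewal (or coupling) estimate shows $m_\varepsilon$ is eventually monotone'' does not follow: the key renewal theorem gives $m_\varepsilon(t)\to 1/\mu_\varepsilon$ but says nothing about monotonicity of the approach, and in general a renewal density can oscillate around its limit. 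Closing this hole requires structural input on the tail of $f_\varepsilon$ of the kind the paper encodes in Conditions (iii)--(iv); without it the argument cannot conclude on $[t_3,\infty)$. Your fallback to a rational-Laplace-transform family would not match the paper's construction (the survival function (7) on $I_2$ is a double exponential, not rational), and even in a rational family, verifying $m_\varepsilon'\le 0$ for all $t$ analytically rather than numerically would still be the crux.
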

Shanthikumar (1988) resolves a discrete version of this conjecture by constructing a counter-example using auxiliary results on discrete Markov chains.  It has also been noted that the discrete example does not generalize and the continuous case is still open.  Our main result (Proposition \ref{thm1}) finally disproves Conjecture~\ref{conj}. 

\begin{proposition}
\label{thm1}
Let $0<t_1<\infty$.  Let $f(t)$ be a density function that is positive on $\mathbf{R}_+$ and continuously differentiable on each of $I_k,\ 0\leq k\leq 3,$ where $I_0=[0, t_1],\ I_1=[t_1, t_2],\ I_2=[t_2, t_3],\ I_3=[t_3,\infty)$, with $t_2, t_3$ to be determined.  That is, $f(t)$ is continuous on $\mathbf{R}_+$, but $f'(t)$ may jump at $t_k,\ k=1,2,3.$
Assume the corresponding hazard rate function $r(t)$ satisfies 
\begin{itemize}
\item[(i)]
$r'(t)<0,\ t\in I_0$; 
\item[(ii)]
on $I_1$ we have 
\begin{equation}
\label{I1}
r(t)=\frac{\lambda}{1 -\epsilon e^{\lambda t}},\quad t_1\leq t\leq t_2,
\end{equation}
for some $\epsilon\in (0, 1)$ where $\lambda>0$ is determined by $\epsilon$ and $r(t_1)$; 
\item[(iii)]
on $I_2$ we have 
\begin{equation}
\label{rpm}
r'(t)\leq r^2(t)-f(0)r(t),\quad t_2<t<t_3;
\end{equation}
\item[(iv)]
$r(t_3)\leq r(t_1)$ and $r'(t)\leq 0,\ t\in I_3$.  
\end{itemize}
Then for small enough $\epsilon>0$ and $t_2-t_1>0$, both depending on the specification of $r(t)$ for $t\in I_0$ only, 
the renewal density $m(t)$ given by (\ref{density}) decreases on $\mathbf{R}_+$. 
\end{proposition}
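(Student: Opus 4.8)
\medskip
\noindent\emph{Proof strategy.}\
The plan is to prove $m'(t)\le 0$ for every $t\ge 0$, splitting at $t_1$. On $[0,t_1]$ nothing about $\epsilon,t_2,t_3$ is needed: by (\ref{density}), $m(t)$ for $t\le t_1$ is the unique solution of an integral equation involving $f$ only on $[0,t_1]$, so $m|_{[0,t_1]}$ coincides with the renewal density of the distribution obtained by replacing $f$ on $(t_1,\infty)$ by the exponential tail $f_0(t)=r(t_1)\bar F(t_1)e^{-r(t_1)(t-t_1)}$. This $f_0$ has hazard rate $r$ on $[0,t_1]$ (decreasing, by (i)) and the constant $r(t_1)$ thereafter, hence is DFR, and Theorem~\ref{thm0}(2) makes its renewal density nonincreasing on all of $\mathbf R_+$; thus $m$ is nonincreasing on $[0,t_1]$ and $m'(t_1^-)\le 0$.

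For $t\ge t_1$ the plan is to differentiate (\ref{density}). With $g=f'+f(0)f$ and $G(s)=\int_0^s g=f(s)-f(0)\bar F(s)=\bar F(s)(r(s)-f(0))$, the Leibniz rule applied to (\ref{density}) followed by an integration by parts (using $m(0)=f(0)$ and $G(0)=0$) yields the fixed-point identity
\begin{equation}
\label{mprime}
m'(t)=w(t)+\int_0^t G(t-y)\,m'(y)\,{\rm d}y,\qquad w(t):=\bar F(t)\bigl[r'(t)-(r(t)-f(0))^2\bigr],
\end{equation}
valid for all $t\ge0$, with $m'$, $g$ and $w$ inheriting the same jumps as $f'$ at $t_1,t_2,t_3$ while the integral term stays continuous. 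The next step is a sign analysis. One checks $r(t)\le r(0)=f(0)$ for all $t$: $r$ decreases from $f(0)$ on $I_0$; on $I_1$ the form (\ref{I1}) makes $r$ increase, but since $\lambda\to r(t_1)$ as $\epsilon\downarrow 0$ and $r(t_2)-r(t_1)\to 0$ as $t_2-t_1\downarrow 0$, the increase is below $f(0)-r(t_1)$ once $\epsilon$ and $t_2-t_1$ are small; on $I_2$, (\ref{rpm}) forces $r'<0$ wherever $r<f(0)$, so $r$ never reaches $f(0)$ there; and (iv) keeps it below on $I_3$. Hence $G\le 0$ on $\mathbf R_+$, $G(0)=0$. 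Similarly $w(t)<0$ for all $t$: on $I_0$ this is immediate from $r'<0$; on $I_1$, (\ref{I1}) satisfies the Riccati relation $r'=r^2-\lambda r$, whence $r'-(r-f(0))^2=(2f(0)-\lambda)r-f(0)^2$, which is negative because $r$ stays below the threshold $f(0)^2/(2f(0)-\lambda)$ --- this threshold lies below $f(0)$ (since $\lambda<f(0)$) but, for small $\epsilon$, strictly above $r(t_1)$, hence above $r$ on all of $I_1$ once $t_2-t_1$ is small too; on $I_2$, (\ref{rpm}) gives $r'-(r-f(0))^2\le f(0)(r-f(0))<0$; and on $I_3$, $r'\le 0$ gives $r'-(r-f(0))^2<0$. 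All the smallness thresholds invoked here depend only on $r(t_1),\,f(0)=r(0)$ and $t_1$, i.e.\ only on $r$ restricted to $I_0$.

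The crux is to pass from (\ref{mprime}) with $w<0$, $G\le 0$ and $m'\le 0$ on $[0,t_1]$ to $m'\le 0$ on $[t_1,\infty)$. The point is that the nonpositive kernel $G$ makes (\ref{mprime}) self-correcting: a positive excursion of $m'$ on some interval feeds back, through $\int G(t-y)m'(y)\,{\rm d}y$, as a later \emph{negative} contribution, so the only sources able to push $m'$ above $0$ are the bounded term $\int_0^{t_1}G(t-y)m'(y)\,{\rm d}y$ (with $m'\le 0$ there) and the upward jumps of $m'$ at $t_1$ and $t_3$. Now the jump at $t_2$ is downward, since by (\ref{rpm}) one has $(\log f)'(t_2^+)\le -f(0)<-\lambda=(\log f)'(t_2^-)$; the jump at $t_3$ has size of order $f(t_3)$, which ``the decrease shortly after $t_2$ is fast enough'' forces to be small; and the jump at $t_1$ exceeds the (harmless) jump of the DFR reference's renewal density by exactly $f(t_1)(r(t_1)-\lambda)=O(\epsilon)$, so $m'(t_1^+)\le O(\epsilon)$. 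The plan is then a bootstrap on $[t_1,\infty)$: fix a small $\eta>0$ (to be tied to $\epsilon$), set $\tau=\inf\{t\ge t_1:m'(t)>\eta\}$, insert the bound $m'(y)\le\eta$ for $y\in[t_1,\tau)$ into (\ref{mprime}), and use $w<0$, $G\le 0$, $G(0)=0$ together with the smallness of $\eta$ to contradict the definition of $\tau$ unless $\tau=\infty$; letting $\eta\downarrow 0$ gives $m'\le 0$.

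The step I expect to be the real obstacle is precisely this bootstrap, namely controlling the feedback integral $\int_0^t G(t-y)m'(y)\,{\rm d}y$ uniformly in $t$ as $t\to\infty$: both $w(t)$ and the ``active'' part of $G$ decay like $\bar F(t)$, so the self-correction from $G\le 0$ must be shown to dominate these vanishing but positive inputs, and this is where the sizes of $\epsilon$, of $t_2-t_1$, and of $f(t_3)$ (hence of the $t_3$-jump of $m'$) all have to be quantified together --- for instance by recasting (\ref{mprime}) as a contraction on a space of functions on $[t_1,\infty)$ with a weight matching the decay of $\bar F$, and checking that its fixed point lands in the nonpositive cone. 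The numerical example promised for Section~2 should then amount to verifying that these competing smallness constraints can be met simultaneously.
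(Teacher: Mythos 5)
Your reduction on $[0,t_1]$ (comparing with a DFR reference whose renewal density agrees with $m$ there) is a nice shortcut, your fixed-point identity
$m'(t)=w(t)+\int_0^t G(t-y)m'(y)\,{\rm d}y$ is algebraically correct and in fact equivalent to the identity the paper proves in Lemma~\ref{lem}, and your sign analysis of $w$ and $G$ under conditions (i)--(iv) is right.  But the proof as proposed does not close, and the gap is structural, not a matter of ``quantifying smallness.''

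The kernel $G(s)=\bar F(s)\bigl(r(s)-f(0)\bigr)$ is nonpositive, and so is the quantity you are trying to propagate.  If $m'\le 0$ on $[0,t)$, then $G(t-y)m'(y)\ge 0$, so the integral in your equation is a \emph{nonnegative} feedback term fighting against $w(t)<0$; it is not self-correcting in the direction you need.  A first-crossing argument therefore fails outright with this kernel: at the first $t$ with $m'(t)=0$ your identity only gives $0=w(t)+(\text{nonnegative})$, which is not a contradiction.  You acknowledge this and gesture at a weighted contraction, but nothing in the Proposition's hypotheses licenses the quantitative bounds that would be needed; in particular your claim that the jump at $t_3$ can be made small by taking $f(t_3)$ small is not available --- the only tunables in the Proposition are $\epsilon$ and $t_2-t_1$, and condition (iv) places no size constraint on $f(t_3)$.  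The paper avoids all of this by proving a differently organized identity (its Lemma~\ref{lem}),
$m'(t)=r'(t)\bar F(t)+\int_0^t m'(x)\,[r(t-x)-r(t)]\,\bar F(t-x)\,{\rm d}x$,
whose kernel $[r(t-x)-r(t)]\bar F(t-x)$ is \emph{nonnegative} precisely when $r$ is decreasing.  With $m'(x)<0$ and this kernel $\ge 0$, both terms on the right are $\le 0$, and the first-crossing argument closes immediately on $I_0$ and $I_3$; on $I_1$ the paper uses the strict inequality $m'(t)<r'(t)\bar F(t)$ (which your DFR comparison only gives non-strictly) plus continuity in $t_2-t_1$; and on $I_2$ it uses the simpler convolution identity $m'(t)=f'(t)+m(0)f(t)+\int_0^t m'(x)f(t-x)\,{\rm d}x$, whose positive kernel $f(t-x)$ together with condition (iii) again yields a direct contradiction.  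This choice of kernel sign is the essential idea your proposal is missing.

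Two smaller points: your $I_0$ argument gives only $m'\le 0$, whereas the paper needs the strict bound $m'(t)<r'(t)\bar F(t)$ to make the $\epsilon\downarrow 0$ limit at $t_1^+$ work; and your claim that condition (iii) by itself makes the jump of $m'$ at $t_2$ downward should be stated as a jump in $f'$ (hence of $m'$, via (\ref{mtp0})), which is correct but deserves the one-line verification $(\log f)'=r'/r-r$.
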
 

Note that $r(t)$ strictly increases on $[t_1, t_2]$.  Proposition \ref{thm1} therefore settles Conjecture \ref{conj} in the negative.  An example of a survival function $\bar{F}$ satisfying Conditions (i) and (ii) is 
\begin{equation}
\label{Fbar}
\bar{F}(t) =\begin{cases} \frac{1}{2}( e^{- t} + 1 ), & 0\leq t \leq t_1, \\
\alpha e^{-\lambda t} -\beta, & t_1< t\leq t_2, \\
\end{cases}
\end{equation}
where $\beta>0$, and $\alpha$ and $\lambda$ are determined by $\beta$ via 
$$\bar{F}(t_1+)=\bar{F}(t_1)\quad {\rm and}\quad \bar{F}'(t_1+)=\bar{F}'(t_1-).$$
Specifically, 
$$\lambda = \left[1+(1+2\beta) e^{t_1}\right]^{-1}, \quad \alpha = (2\lambda)^{-1} e^{(\lambda -1)t_1}.$$
The $\epsilon$ in (\ref{I1}) corresponds to $\beta/\alpha$.  Condition (iii) says that the hazard rate should decrease fast shortly after $t_2$.  An example based on (\ref{Fbar}) that satisfies this condition is 
$$r(t)=r(t_2) e^{(t_2-t)/2},\quad t_2 < t \leq t_3,$$
which leads to 
\begin{equation}
\label{barF}
\bar{F}(t) = \bar{F}(t_2) \exp\left[-2r(t_2) \left(1-e^{(t_2-t)/2}\right)\right],\quad t_2<t\leq t_3.
\end{equation}
For Condition (iv), we need $t_3\geq t_2 + 2\log (r(t_2)/r(t_1))$ to ensure $r(t_3)\leq r(t_1)$, but $r(t)$ can stay flat on $t\in I_3$, which gives
\begin{equation}
\label{F3}
\bar{F}(t)=\bar{F}(t_3) e^{r(t_3)(t_3-t)},\quad t>t_3.
\end{equation}

As an illustration, Figure 1 shows the survival function, density, hazard rate, and renewal function for a distribution as specified by (\ref{Fbar}), (\ref{barF}) and (\ref{F3}) with $t_1=1,\ t_2=1.5,\ t_3=2,$ and $\beta=0.02$.  The almost imperceptible decrease of $m(t)$ on $t\in [1, 1.5]$ is verified numerically as Proposition~\ref{thm1} guarantees monotonicity of $m(t)$ for small enough $\beta>0$ and $t_2-t_1>0$ but does not specify how small $\beta$ or $t_2-t_1$ has to be. 

\begin{figure}
\psfrag{Fbar(t)}{\small $\bar{F}(t)$}
\psfrag{r(t)}{\small $r(t)$}
\psfrag{f(t)}{\small $f(t)$}
\psfrag{m(t)}{\small $m(t)$}
\psfrag{t}{\small $t$}
\begin{center}
\includegraphics[width=3.5in, height=5.2in, angle=270]{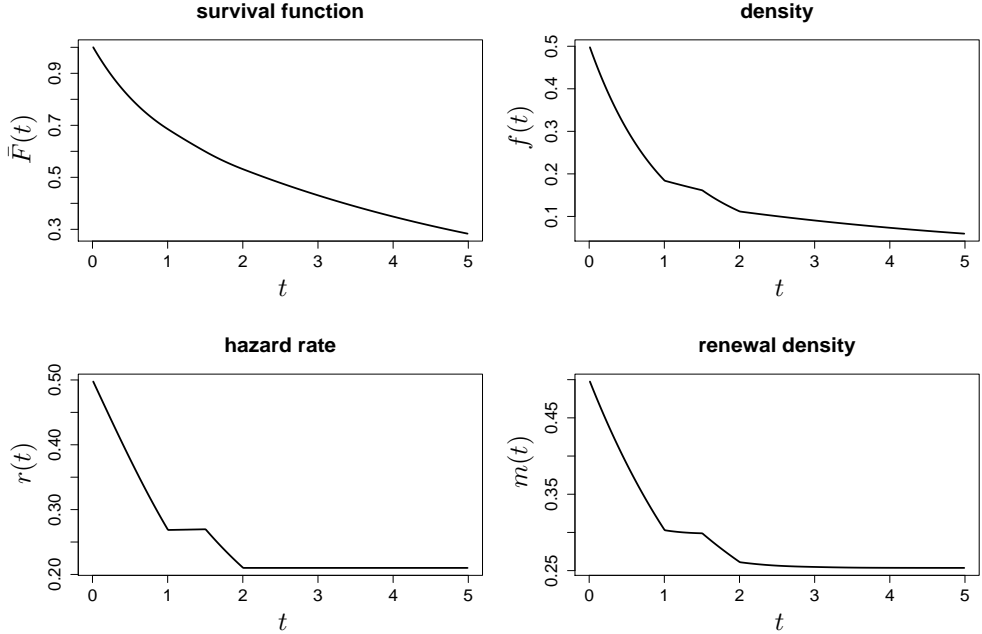}
\end{center}
\caption{Illustration of a counter-example given by (\ref{Fbar}), (\ref{barF}) and (\ref{F3}) with $t_1=1,\ t_2=1.5,\ t_3=2$ and $\beta=0.02$.}
\end{figure}

\section{Proof of Proposition \ref{thm1}}
We first establish a simple but useful identity. 
\begin{lemma}
\label{lem}
Let $r(t)$ and $\bar{F}(t)$ denote the hazard rate and survival function, respectively, for a distribution with density $f(t)$ on $\mathbf{R}_+$.  Assume $f(t)$ is absolutely continuous and $f'(t)$ is bounded on every compact sub-interval of $\mathbf{R}_+$.  Then the renewal density $m(t)$ as defined by (\ref{density}) satisfies 
\begin{equation}
\label{key}
m'(t)= r'(t)\bar{F}(t) +\int_0^t m'(x)[r(t-x)-r(t)]\bar{F}(t-x)\, {\rm d}x,\quad t>0.
\end{equation}
\end{lemma}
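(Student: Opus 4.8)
The plan is to differentiate the renewal equation (\ref{density}) in $t$ and then recast the resulting convolution in terms of the hazard rate $r=f/\bar F$. First I would record that, under the stated hypotheses, the renewal density $m=\sum_{n\geq 1}f^{*n}$ is continuous and locally bounded on $\mathbf{R}_+$, is absolutely continuous with $m'$ bounded on every compact sub-interval, and satisfies $m(0^+)=f(0)$ by letting $t\to 0$ in (\ref{density}). The regularity is a routine bootstrap: $f^{*2}$ is $C^1$ with a locally bounded derivative because $f$ is absolutely continuous with $f'$ locally bounded, one iterates to get the same for every $f^{*n}$, and the series $\sum_n(f^{*n})'$ converges locally uniformly since $\sum_n\|f^{*n}\|_{\infty,[0,T]}\leq\|f\|_{\infty,[0,T]}\,(1+M(T))<\infty$. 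Writing the convolution in the reflected form $\int_0^t m(x)f(t-x)\,{\rm d}x=\int_0^t m(t-y)f(y)\,{\rm d}y$ and differentiating then gives
$$m'(t)=f'(t)+f(0)f(t)+\int_0^t m'(x)f(t-x)\,{\rm d}x,\qquad t>0.$$

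Next, using $f=r\bar F$ and inserting $\pm\,r(t)\bar F(t-x)$ inside the integrand, I would split
$$\int_0^t m'(x)f(t-x)\,{\rm d}x=\int_0^t m'(x)[r(t-x)-r(t)]\bar F(t-x)\,{\rm d}x+r(t)\int_0^t m'(x)\bar F(t-x)\,{\rm d}x .$$
The first term on the right is precisely the integral appearing in (\ref{key}), so it remains to handle the second, and for that I would prove the auxiliary identity
$$\int_0^t m'(x)\bar F(t-x)\,{\rm d}x=f(t)-f(0)\bar F(t).$$
This follows from a single integration by parts — using $\frac{{\rm d}}{{\rm d}x}\bar F(t-x)=f(t-x)$ together with the boundary values $\bar F(0)=1$ and $m(0^+)=f(0)$ — followed by the substitution $\int_0^t m(x)f(t-x)\,{\rm d}x=m(t)-f(t)$ read off from (\ref{density}).

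Finally I would put this back in and collect the terms not under the integral sign. What then remains to check is the pointwise identity
$$f'(t)+f(0)f(t)+r(t)\big[f(t)-f(0)\bar F(t)\big]=r'(t)\bar F(t),$$
which is immediate from $f=r\bar F$, $f'=r'\bar F-r^2\bar F$, and $f(0)=r(0)$ (as $\bar F(0)=1$): the four terms other than $r'\bar F$ cancel in pairs. This yields (\ref{key}).

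The only delicate point is the regularity claim at the outset — that $m$ is absolutely continuous with a locally bounded derivative, so that termwise differentiation of (\ref{density}) is legitimate; the hypothesis that $f$ is absolutely continuous with $f'$ bounded on compacts is tailored precisely for this. Should one wish to avoid pointwise differentiation, an alternative is to verify (\ref{key}) first in integrated form — integrate the proposed right-hand side over $[0,t]$ and use Fubini together with (\ref{density}) to recover $m(t)-m(0^+)$ — and only then differentiate once; but the direct route above is shorter, so I would take it.
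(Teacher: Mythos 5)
Your proof is correct and takes essentially the same route as the paper: differentiate the renewal equation to obtain $m'(t)=f'(t)+m(0)f(t)+\int_0^t m'(x)f(t-x)\,{\rm d}x$, establish the auxiliary identity $\int_0^t m'(x)\bar F(t-x)\,{\rm d}x=f(t)-f(0)\bar F(t)$ by a single integration by parts combined with the renewal equation, insert $\pm\, r(t)\bar F(t-x)$ and simplify using $f=r\bar F$, $f'=r'\bar F-r^2\bar F$, $m(0)=f(0)=r(0)$. The only cosmetic differences are that you reflect the convolution before differentiating (thereby reaching the paper's equation (\ref{mtprime}) in one step rather than via (\ref{mtp0}) and an integration by parts) and that you spell out the regularity bootstrap the paper handles with a one-line remark.
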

A discrete version of (\ref{key}) can be traced back to Kaluza (1928). 
\begin{proof}[Proof of Lemma \ref{lem}]
The conditions guarantee that $m(t)$ is absolutely continuous.  In fact, we may differentiate under the integral sign in (\ref{density}) and get 
\begin{equation}
\label{mtp0}
m'(t) = f'(t) + m(t) f(0) + \int_0^t m(x) f'(t-x)\, {\rm d} x,\quad t>0.
\end{equation}
Integration by parts then yields  
\begin{equation}
\label{mtprime}
m'(t) = f'(t) + m(0) f(t) + \int_0^t m'(x) f(t-x)\, {\rm d} x,\quad t>0.
\end{equation}
We also have 
\begin{align}
\nonumber
\int_0^t m'(x) \bar{F}(t-x)\, {\rm d}x &= m(t) - f(0)\bar{F}(t) -\int_0^t m(x) f(t-x)\, {\rm d}x\\
\label{int}
&=f(t)-f(0)\bar{F}(t),
\end{align}
where the first step uses integration by parts and the second uses (\ref{density}).  The identity (\ref{key}) follows by expanding its right hand side and applying (\ref{int}) and (\ref{mtprime}) to simplify. 
\end{proof}

\begin{proof}[Proof of Proposition \ref{thm1}]
Since $f'(t)$ is piecewise continuous, so is $m'(t)$ as seen from (\ref{mtp0}).  We have $m'(0+)=r'(0+)< 0$.  Suppose $m'(t)$ ever becomes nonnegative on $I_0$.  Then letting $t_*$ be the smallest $t\in (0, t_1)$ such that $m'(t)\geq 0$ we have $m'(x)<0,\ 0<x<t_*$, and by Condition (i) $r'(t_*)<0,\ r(t_*-x)-r(t_*) >0,\ 0<x<t_*$.  It follows from (\ref{key}) that $m'(t_*)< 0$, a contradiction.  Thus $m'(t)<0,\ t\in I_0$.  In fact, applying (\ref{key}) again yields 
\begin{equation}
\label{strong}
m'(t)< r'(t)\bar{F}(t),\quad t\in I_0,
\end{equation}
where the left derivatives are used if $t=t_1$.  

By (\ref{I1}) we have 
$$r'(t)=(\lambda^{-1} r(t)-1)r(t),\quad t_1<t<t_2,$$
where $\lambda$ is determined from $\epsilon$ via $r(t_1)=\lambda /(1 -\epsilon e^{\lambda t_1})$.  For fixed $r(t_1)$ as $\epsilon\downarrow 0$ we have $\lambda \uparrow r(t_1)$ and hence $r'(t_1+)\downarrow 0$.  Denoting $\delta =m'(t_1-) - r'(t_1-)\bar{F}(t_1)$, and noting $\delta<0$ by (\ref{strong}), we get 
\begin{align*}
m'(t_1+) = r'(t_1+)\bar{F}(t_1)+\delta <0, 
\end{align*}
for small enough $\epsilon>0$. 
Because $m'(t)$ is continuous on $(t_1, t_2)$, and $m'(t_1+)<0$, we have $m'(t)<0,\ t\in (t_1, t_2),$ if $t_2 - t_1$ is small enough.  Thus $m(t)$ decreases on $I_1$. 

Also, $m(t)$ must strictly decrease on $I_2$.  Assume the contrary and let $t^*$ be the smallest $t\in [t_2, t_3)$ such that $m'(t+)\geq 0$.  Then (\ref{mtprime}) gives 
\begin{equation}
\label{mp+}
0\leq m'(t^*+)< f'(t^*+)+m(0)f(t^*),
\end{equation}
because inside the integral $m'(x)<0,\ x\in [0, t^*)$.  However, by (\ref{rpm}) we have 
$$f'(t^*+)+ m(0)f(t^*) = \bar{F}(t^*)\left[r'(t^*+)-r^2(t^*)+f(0)r(t^*)\right]\leq 0,$$
which contradicts (\ref{mp+}). 

Finally, we show that $m(t)$ decreases on $I_3$ by applying (\ref{key}) again.  The assumptions $r(t_3)\leq r(t_1)$ and $r'(t)\leq 0,\ t\in I_3,$ ensure that $r(t-x)\geq r(t),\ 0<x<t,\ t>t_3,$ with strict inequality if $t-x<t_1$.  It is already shown that $m'(x)< 0$ for $x<t_3$.  Thus (\ref{key}) implies $m'(t_3+)< 0$.  The same argument proving $m'(t)<0$ for $t\in I_0$ then shows that $m(t)$ decreases on $I_3$. 
\end{proof}

\section{Preservation of DFR under geometric compounding}
Compound geometric random variables appear naturally in areas such as queuing theory (see, e.g., Szekli 1986) and financial risk modeling.  It is well known that log-convexity is closed under geometric compounding (this is essentially Part 1 of Theorem \ref{thm0}).  Shanthikumar (1988) showed that the DFR property is also closed under geometric compounding.  This was achieved by establishing auxiliary results on discrete Markov chains.  It may be worthwhile to note that the argument of de Bruijn and Erd\"{o}s (1953) can be adapted to give a short proof of Shanthikumar's (1988) result (Part 1 of Theorem \ref{thm2}).  The same argument yields a parallel result (Part 2 of Theorem \ref{thm2}) concerning the increasing failure rate (IFR) property. 

\begin{theorem}
\label{thm2}
Let $X$ be a random variable on $\mathbf{N}=\{1, 2, \ldots\}$ and let $T$ be a geometric with parameter $p\in (0, 1)$, i.e., $\Pr(T=n)=p q^{n-1},\ n=1, 2, \ldots,\ q\equiv 1-p$.  Define the random sum $Y\equiv \sum_{k=1}^T X_k$ where $X_k$ are independent (and also independent of $T$) and identically distributed as $X$. 
\begin{enumerate}
\item
If $\log\Pr(X\geq n)$ is convex in $n\in \mathbf{N}$, i.e., $X$ is discrete DFR, then so is $Y$.
\item
If $\log\Pr(Y\geq n)$ is concave in $n\in \mathbf{N}$, i.e., $Y$ is discrete IFR, then so is $X$.
\end{enumerate}
\end{theorem}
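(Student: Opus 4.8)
The plan is to translate the two claims into statements about generating functions, read off a renewal-type recursion, and then run the inductive argument of de Bruijn and Erd\"{o}s (1953) in a form that carries an extra forcing term. Write $A(z)=\mathbf{E}z^{X}=\sum_{n\ge1}a_{n}z^{n}$ and $\bar{A}(z)=\sum_{n\ge1}\bar{a}_{n}z^{n}$, where $a_{n}=\Pr(X=n)$ and $\bar{a}_{n}=\Pr(X\ge n)$, and note $\bar{A}(z)=z(1-A(z))/(1-z)$; define $B,\bar{B},b_{n},\bar{b}_{n}$ analogously for $Y$. From $B(z)=pA(z)/(1-qA(z))$, a short computation (eliminating $A$ in favour of $\bar{A}$ via $A(z)=1-(1-z)\bar{A}(z)/z$) gives the clean identity
\begin{equation*}
\frac{1}{\bar{B}(z)}=\frac{p}{\bar{A}(z)}+\frac{q(1-z)}{z},
\end{equation*}
which is equivalent to either of the renewal-type recursions
\begin{equation*}
\bar{b}_{n}=\bar{a}_{n}+q\sum_{k=1}^{n-1}a_{k}\bar{b}_{n-k},\qquad p\,\bar{b}_{n}=p\,\bar{a}_{n}+q\sum_{k=1}^{n-1}b_{k}\,\bar{a}_{n-k},\qquad n\ge1 .
\end{equation*}
Recall that ``discrete DFR'' is precisely log-convexity of $(\bar{a}_{n})$, i.e.\ $\bar{a}_{n+1}/\bar{a}_{n}$ nondecreasing, while ``discrete IFR'' is log-concavity of $(\bar{a}_{n})$.

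For Part~1 I would prove, by strong induction on $n$, that $\bar{b}_{1},\dots,\bar{b}_{n}$ is log-convex, i.e.\ $\bar{b}_{j}\bar{b}_{j+2}\ge\bar{b}_{j+1}^{2}$ whenever $j+2\le n$. Given this for indices below $n$, one expands $\bar{b}_{n+1}\bar{b}_{n}$ and $\bar{b}_{n+2}\bar{b}_{n}$ through the first recursion above and must deduce $\bar{b}_{n+2}/\bar{b}_{n+1}\ge\bar{b}_{n+1}/\bar{b}_{n}$. This does \emph{not} follow from a term-by-term comparison through the recursion --- indeed the kernel $(qa_{k})$ of that recursion is in general not log-convex when $X$ is merely DFR, so Part~1 of Theorem~\ref{thm0} cannot simply be invoked --- and the argument instead proceeds as in de Bruijn--Erd\"{o}s: a discrete integration by parts that plays the inductively known ratio-monotonicity of $(\bar{b}_{j})_{j<n}$ against the log-convexity of $(\bar{a}_{n})$, with the forcing term $\bar{a}_{n}$ entering favourably because it too is log-convex. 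I expect this rearrangement to be the main obstacle: getting the combinatorial bookkeeping right so that the two monotonicities cooperate --- and so that the extra forcing term, absent from the classical pure-renewal setting of de Bruijn--Erd\"{o}s, is accommodated --- is the delicate point.

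Part~2 is the order-dual, run on the second recursion with every inequality reversed: assuming $(\bar{b}_{n})$ log-concave, the same induction forces $(\bar{a}_{n})$ to be log-concave, i.e.\ $X$ to be discrete IFR. The only thing to check is that no step of the de Bruijn--Erd\"{o}s rearrangement is intrinsically one-directional, so that it transfers verbatim from ``$\ge$'' to ``$\le$''; this is what is meant by ``the same argument''. Note that the asymmetry between the two parts is forced and not an artefact of the proof: a geometric sum of a deterministic (hence IFR) summand is in general not IFR, so there is no ``Part~1 for IFR'', and only the detection statement of Part~2 survives.
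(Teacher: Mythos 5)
Your strategic plan is the right one --- run a de Bruijn--Erd\H{o}s rearrangement inductively on the renewal-type recursion $\bar{G}_n=\bar{F}_n+q\sum_{k=1}^{n-1}f_k\bar{G}_{n-k}$, and your observation that Part~1 of Theorem~\ref{thm0} cannot be invoked (the kernel $(qf_k)$ is not log-convex when $X$ is merely DFR) is correct and worth making explicit. The generating-function identity $1/\bar{B}(z)=p/\bar{A}(z)+q(1-z)/z$ is also a tidy way to package the recursions, which the paper simply states directly. But the proposal halts exactly where the proof has to begin. You write that ``getting the combinatorial bookkeeping right \ldots is the delicate point'' and then do not do the bookkeeping. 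That bookkeeping \emph{is} the proof: it is the closed-form identity
\begin{align*}
\bar{F}_n\bigl(\bar{G}_{n+2}\bar{G}_n-\bar{G}_{n+1}^2\bigr)
=&\ p\,\bar{G}_n\bigl(\bar{F}_{n+2}\bar{F}_n-\bar{F}_{n+1}^2\bigr)\\
&+q\sum_{k=2}^n\bigl(\bar{F}_{n+1}f_{k-1}-\bar{F}_n f_k\bigr)\bigl(\bar{G}_{n+1}\bar{G}_{n+1-k}-\bar{G}_n\bar{G}_{n+2-k}\bigr),
\end{align*}
verified by expanding the right side through the recursion, which the paper displays as \eqref{induct}. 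Once written down, the induction is immediate: log-convexity of $(\bar{F}_n)$ forces $\bar{F}_{n+1}f_{k-1}-\bar{F}_n f_k\geq 0$ (combine the DFR hazard-rate monotonicity with $\bar{F}_{n+1}/\bar{F}_n\geq\bar{F}_k/\bar{F}_{k-1}$), the inductive hypothesis forces the $\bar{G}$-factors nonnegative, and the first term on the right is exactly where the ``extra forcing term'' you worried about lands, with the correct sign. Note also that the paper reverses every inequality in the \emph{same} identity to get Part~2 --- you do not need the second (dual) recursion you set up, though it would also work. As it stands, the proposal is a correct outline with the central identity, and hence the actual argument, missing.

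Your closing remark that there is no ``Part~1 for IFR'' is correct (take $X\equiv 2$: then $\bar{G}_2\bar{G}_4=q>q^2=\bar{G}_3^2$), and is a good sanity check on the asymmetry of the statement.
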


\begin{proof}
Denote 
$$f_n=\Pr(X=n),\quad \bar{F}_n=\Pr(X\geq n), \quad g_n=\Pr(Y=n),\quad \bar{G}_n=\Pr(Y\geq n).$$
We have the recursions 
\begin{equation}
\label{recursion}
g_n= pf_n + q \sum_{k=1}^{n-1} f_k g_{n-k},\quad \bar{G}_n = \bar{F}_n +q\sum_{k=1}^{n-1} f_k \bar{G}_{n-k},\quad n=1,2,\ldots.
\end{equation}
The following identity is analogous to Equation (7) of de Bruijn and Erd\"{o}s (1953); Hansen (1988) uses similar identities for compound Poisson probabilities (see Yu 2009 for related work).  It is proved by expanding the right hand side and then applying (\ref{recursion}). 
\begin{align}
\nonumber
\bar{F}_n (\bar{G}_{n+2}\bar{G}_n -\bar{G}_{n+1}^2) = & p \bar{G}_n (\bar{F}_{n+2}\bar{F}_n -\bar{F}_{n+1}^2) \\
\label{induct}
&+q\sum_{k=2}^n (\bar{F}_{n+1} f_{k-1} -\bar{F}_n f_k)(\bar{G}_{n+1}\bar{G}_{n+1-k} -\bar{G}_n \bar{G}_{n+2-k}).
\end{align}
In particular, $\bar{G}_3\bar{G}_1- \bar{G}_2^2=p(\bar{F}_3\bar{F}_1 -\bar{F}_2^2).$  Assuming $\bar{F}_n$ is log-convex, we get $\bar{G}_{n+1}\bar{G}_{n+1-k} \geq \bar{G}_n \bar{G}_{n+2-k},\ 2\leq k\leq n,$ and $\bar{G}_{n+2}\bar{G}_n \geq \bar{G}_{n+1}^2,\ n\geq 1,$ by induction from (\ref{induct}).  Thus $\bar{G}_n$ is log-convex in $n\in \mathbf{N}$, i.e., $Y$ is discrete DFR, and Part 1 is proved.  Similarly, assuming $\bar{G}_n$ is log-concave, we get $\bar{F}_{n+1} f_{k-1} \leq \bar{F}_n f_k,\ 2\leq k\leq n,$ and $\bar{F}_{n+2}\bar{F}_n \leq \bar{F}_{n+1}^2,\ n\geq 1,$ by induction.  Thus $\bar{F}_n$ is log-concave in $n\in \mathbf{N}$, i.e., $X$ is discrete IFR, and Part 2 is proved.
\end{proof}

\section*{Acknowledgement}
The author would like to thank Mark Brown, George Shanthikumar and Ryszard Szekli for their helpful comments.

\end{document}